\newtheorem{theorem}{Theorem}
\newtheorem{lemma}{Lemma}
\theoremstyle{remark}
\newtheorem*{ack}{Acknowledgements}
\newcommand*{\R}{\mathbb{R}}
\newcommand*{\N}{\mathbb{N}}
\newcommand*{\al}{\alpha}
\newcommand*{\rk}{\mathrm{rk}}
\newcommand*{\stab}{\mathrm{Stab}}
\newcommand*{\aut}{\mathrm{Aut}}
\newcommand*{\F}{\mathcal{F}}
\newcommand*{\G}{\mathcal{G}}
\title{The rank of edge connection matrices and the dimension of algebras of invariant tensors}
\author{Guus Regts\footnote{CWI, Amsterdam. Email: regts@cwi.nl.}}
\begin{document}

\maketitle
\begin{abstract}
We characterize the rank of edge connection matrices of partition functions of real vertex models, as the dimension of the homogeneous components of the algebra of $G$-invariant tensors. Here $G$ is the subgroup of the real orthogonal group that stabilizes the vertex model.
This answers a question of Bal\'azs Szegedy from 2007.
\\[.5cm]
\textbf{Keywords:} Edge connection matrix; partition function; vertex model; tensor algebra; orthogonal group; invariants.
\end{abstract}

\section{Introduction}
Let $\G$ be the set of all isomorphism classes of graphs, allowing multiple edges, loops and circles. 
Here a circle is a loop which does not contain vertices.
An $\R$-valued \emph{graph parameter} is a map $p:\G \to \R$ which takes the same values on isomorphic graphs.

Throughout this paper we set $\N=\{0,1,2\ldots\}$ and for $n\in \N$, $[n]$ denotes the set $\{1,\ldots,n\}$.
Let 
\begin{equation}
R:=\R[x_1,\ldots,x_n], 
\end{equation}
denote the polynomial ring in $n$ variables. 
Note that there is a one-to-one correspondence between linear functions $h:R\to \R$ and maps $h:\N^n\to \R$; $\alpha\in \N^n$ corresponds to the monomial $x^{\alpha}:=x_1^{\alpha_1}\cdots x_n^{\alpha_n}\in R$ and the monomials form a basis for $R$.
Let $D$ and  $E$ be finite sets, with $D\subseteq E$.
Given a map $\phi:E\to[n]$, we can view the multiset $\phi(D)$ as an element $\alpha \in \N^n$ via $\alpha_1=|\phi^{-1}\{1\}\cap D|,\ldots, \alpha_n=|\phi^{-1}\{n\}\cap D|$.

Following de la Harpe and Jones \cite{HJ} we call any map $h:\N^n\to \R$ a \emph{vertex model}. 
The \emph{partition function} of $h$ is the graph parameter $p_h:\G\to \R$ defined by
\begin{equation}
p_h(G)= \sum_{\phi: EG\to [n]} \prod_{v\in VG} h(\phi(\delta(v))),
\end{equation}
for $G\in \G$. Here $\delta(v)$ is the multiset of edges incident with $v$. Note that a loop counts twice.

We need to introduce the concept of a fragment. 
A \emph{$k$-fragment} is a graph which has $k$ of its vertices labeled $1$ up to $k$, each having degree one.
These labeled vertices are called the \emph{open ends} of the graph. 
The edge connected to an open end is called a \emph{half edge}. 
Let $\F_k$ be the collection of all $k$-fragments.
We can identify $\F_0$ with $\G,$ the collection of all graphs.
Let $k\in \N$.
Define a glueing operation $*:\F_k\times \F_{k}\to \G$ as follows: for $F,H\in \F_k$ connect the
half edges incident with open ends with identical labels to form single edges
(with the open ends erased); the resulting graph is denoted by $F*G$; see Figure~\ref{fig:gluing}.
Note that by glueing two half edges, of which both their endpoints are open ends, one creates a circle.

\begin{figure}
\begin{center}
\includegraphics[width=.6\textwidth]{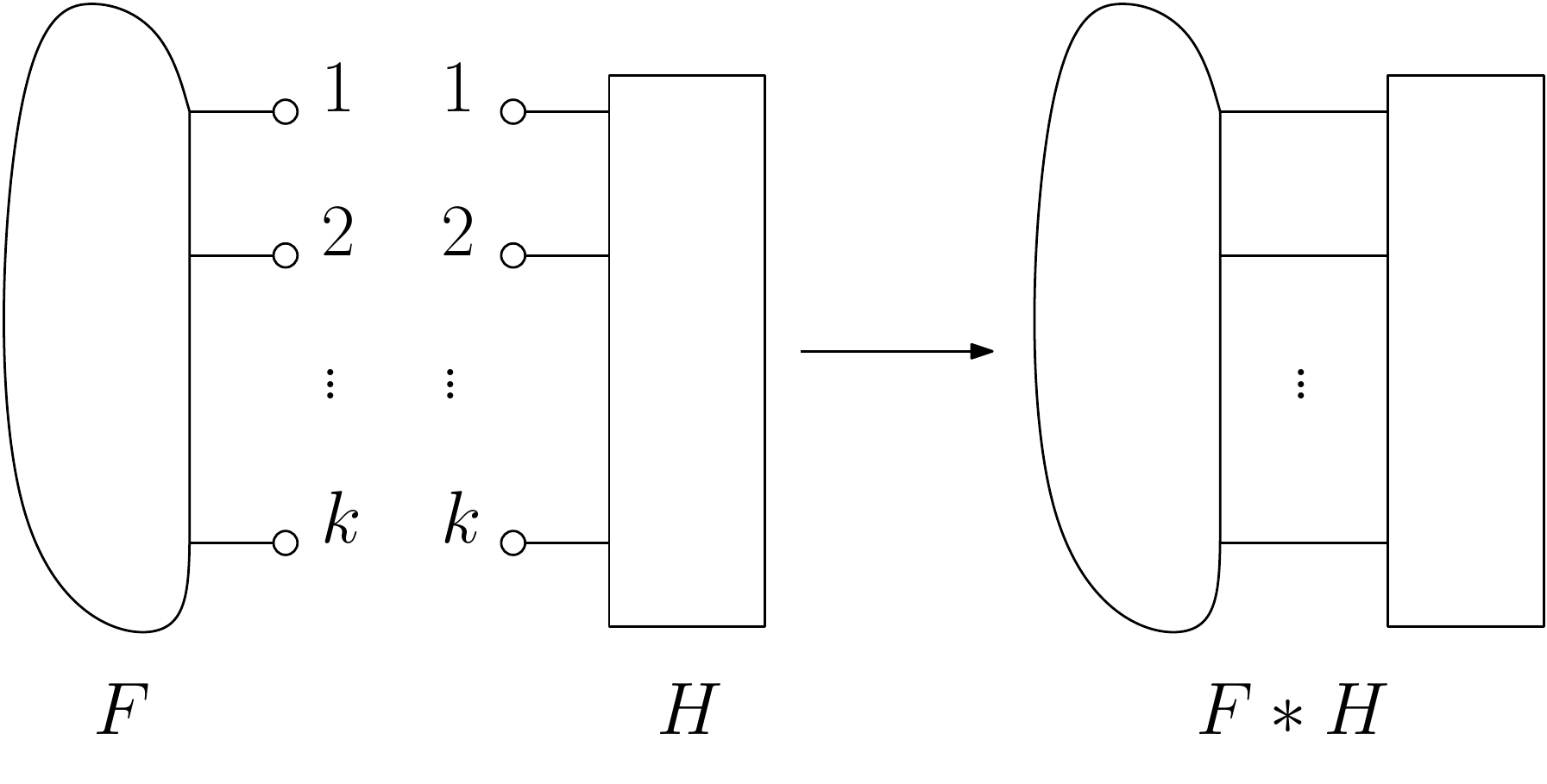}
\caption{Glueing two $k$-fragments into a graph.}
\label{fig:gluing}
\end{center}
\end{figure}

For any graph parameter $p,$ let $M_{p,k}$ be the $\F_k\times \F_k$ matrix defined by 
\begin{equation}
M_{p,k}(F,H)=p(F*H),
\end{equation}
for $F,H\in \F_k$.
This matrix is called the \emph{$k$-th edge connection matrix} of $p$.
In \cite{Szegedy}, Szegedy used the edge connection matrices to characterize when a graph parameter $p$ is of the form $p_h$ for some $h:\N^n\to \R$.
In the same paper (cf. \cite{Szegedy} Question 3.1), Szegedy asked for a characterization of $\rk(M_{p,k})$, for $k\in \N$, if $p=p_h$ for some $h:\N^n\to \R$.  
The same question is also posed in \cite{count}.
In this paper we give an answer to this question, which we describe now.

Let $O(n):=\{U\in \R^{n\times n}\mid U^TU=I\}$ be the orthogonal group.
The orthogonal group acts on the polynomial ring $R$; 
for $q\in R$, $v\in \R^n$ and $U\in O(n),$ $(Uq)(v):=q(U^{-1}v)$. 
The action of $O(n)$ on $R$ induces an action on $R^*$, the space of linear functions on $R$;
for $h\in R^*$, and $U\in O(n)$,  $Uh$ is defined by $(Uh)(q):=h(U^{-1}q)$, for $q\in R$.

For $h\in R^*$, define 
\begin{equation}
\stab(h):=\{U\in O(n)\mid Uh=h\}.
\end{equation}
Let $V=\R^n$.
The action of $O(n)$ on $V$ extends naturally to $V^{\otimes k}$ for any $k\in \N$.
As usual, for a subgroup $H$ of $O(n)$, 
\begin{equation}
(V^{\otimes k})^H:=\{v\in V^{\otimes k} \mid Uv=v \text{ for all } U\in H\}.
\end{equation}
Our characterization reads:

\begin{theorem}	\label{thm:main}
Let $h:\N^n\to \R$.
Then for any $k\in \N$,
\begin{equation}
\rk(M_{p_h,k})=\dim (V^{\otimes k})^{\stab(h)}.
\end{equation}
\end{theorem}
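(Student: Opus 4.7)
The plan is to realize $M_{p_h,k}$ as a Gram matrix of explicit tensors in $V^{\otimes k}$, and then to identify the span of these tensors with the $\stab(h)$-invariants via invariant theory. For each $F\in\F_k$, define the \emph{fragment tensor}
\[
T_F \;=\; \sum_{\phi:EF\to[n]}\;\biggl(\prod_{v\in VF} h(\phi(\delta(v)))\biggr)\;e_{\phi(\ell_1)}\otimes\cdots\otimes e_{\phi(\ell_k)} \;\in\; V^{\otimes k},
\]
where $\ell_i$ denotes the half-edge at the $i$th open end of $F$ and $e_1,\ldots,e_n$ is the standard basis of $V=\R^n$. A coloring of $F*H$ is exactly a pair of colorings of $F$ and $H$ that agree on each of the $k$ glued pairs of half-edges, and summing over those common values is precisely the standard inner product on $V^{\otimes k}$. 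Hence $M_{p_h,k}(F,H)=\langle T_F,T_H\rangle$, so $M_{p_h,k}$ is a Gram matrix and $\rk(M_{p_h,k})=\dim\mathrm{span}\{T_F:F\in\F_k\}$. The theorem then reduces to the identity $\mathrm{span}\{T_F:F\in\F_k\}=(V^{\otimes k})^{\stab(h)}$.

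For the inclusion $\subseteq$, identify $h\in R^*$ with its sequence of symmetric tensors $h_d\in\mathrm{Sym}^d(V)$; the condition $Uh=h$ is equivalent to $Uh_d=h_d$ for all $d$. The tensor $T_F$ arises by applying the $O(n)$-equivariant contraction prescribed by the edge structure of $F$ to $\bigotimes_{v\in VF} h_{\deg v}$, leaving the $k$ open-end indices free. Equivariance of contraction together with $\stab(h)$-invariance of each $h_d$ gives $T_F\in(V^{\otimes k})^{\stab(h)}$.

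The reverse inclusion is the heart of the argument and combines two invariant-theoretic ingredients. First, the First Fundamental Theorem for $O(n)$ asserts that $\mathrm{Hom}_{O(n)}(V^{\otimes a},V^{\otimes b})$ is spanned by the ``matching'' maps built from the standard metric; applied here, this identifies $\mathrm{span}\{T_F:F\in\F_k\}$ with the image in $V^{\otimes k}$ of every $O(n)$-equivariant polynomial construction on $h$ (each such construction decomposes, in each multidegree, as a matching-contraction applied to a tensor product of $h_d$'s, which by definition is a fragment tensor). Second, because $\stab(h)$ is the point-stabilizer of $h$ under the $O(n)$-action on $R^*$ and $O(n)$ is compact (hence reductive, with closed orbits), the evaluation-at-$h$ map from $O(n)$-equivariant polynomial maps $R^*\to V^{\otimes k}$ surjects onto $(V^{\otimes k})^{\stab(h)}$.

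The main obstacle is precisely this surjectivity. Working with the infinite-dimensional $R^*$ requires care: one truncates to $\bigoplus_{d\le D}\mathrm{Sym}^d(V)$ for $D$ large enough to accommodate all vertex degrees appearing in a given finite set of fragments, reducing to a finite-dimensional orbit problem where reductive-group arguments apply directly, and then notes that only finitely many $h_d$'s feed into any particular fragment tensor. A convenient dual route, avoiding explicit Peter-Weyl machinery, is via Reynolds averaging: any $w\in(V^{\otimes k})^{\stab(h)}$ orthogonal to every fragment tensor would pair trivially with every $O(n)$-equivariant polynomial in $h$, and invariant-theoretic density of such polynomials in the space of $\stab(h)$-invariants then forces $w=0$.
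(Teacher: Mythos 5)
Your Gram matrix reduction and the identification of the rank of $M_{p_h,k}$ with $\dim\mathrm{span}\{T_F\}$ is exactly what the paper does (your $T_F$ is the paper's $p_h(F)$), and your argument for the inclusion $\mathrm{span}\{T_F\}\subseteq (V^{\otimes k})^{\stab(h)}$ matches the paper's Lemma~\ref{lem:basic} plus the observation that every fragment arises from basic fragments and the ``identity'' fragment by products and contractions. Where you diverge is in the reverse inclusion: the paper does not argue directly with the First Fundamental Theorem and evaluation maps, but instead invokes Schrijver's tensor-subalgebra theorem (Theorem~\ref{thm:schrijver}), which says that any graded, contraction-closed subalgebra of $T(V)$ containing $\sum_i e_i\otimes e_i$ equals $T(V)^{\stab(A)}$. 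Verifying those hypotheses for $A=p_h(\R\F)$ is quick (Lemma~\ref{lem:map p}), and then a short computation shows $\stab(p_h(\R\F))=\stab(h)$. So the paper's route is: package the entire invariant-theoretic difficulty into one cited black box, then check combinatorial hypotheses. Your route is to unpack that black box into its two ingredients — FFT for $O(n)$ and surjectivity of evaluation at $h$ — and reassemble them.

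The gap in your proposal is the surjectivity claim in the second ingredient. You assert that, because $O(n)$ is compact and $\stab(h)$ is a point stabilizer, the evaluation-at-$h$ map on $O(n)$-equivariant polynomial maps $W_D\to V^{\otimes k}$ (for a suitable finite truncation $W_D$ of $R^*$) surjects onto $(V^{\otimes k})^{\stab(h)}$. This is true, but it is not a triviality and your justification — ``reductive, closed orbits,'' then ``invariant-theoretic density forces $w=0$'' — names the conclusion rather than proving it. A complete argument needs, at minimum: (i) that the decreasing chain of stabilizers $\stab(h|_{\le D})$ stabilizes at $\stab(h)$ for large $D$ (so that the truncated problem computes the right space of invariants); (ii) a density statement — e.g.\ Stone--Weierstrass on the compact orbit $O(n)\cdot h\subset W_D$ combined with Reynolds averaging to pass from continuous equivariant maps to polynomial equivariant maps — and (iii) the observation that a dense linear subspace of a finite-dimensional space is the whole space. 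Likewise, the first ingredient (that every $O(n)$-equivariant polynomial construction on $h$, in each multidegree, is a linear combination of fragment tensors) requires care with symmetrization: a multidegree-$(m_d)_d$ covariant is an $O(n)$-map out of $\bigotimes_d\mathrm{Sym}^{m_d}(\mathrm{Sym}^d V)$, which must be lifted to a tensor power of $V$ before FFT applies. None of this is wrong, but collectively it amounts to reproving the substance of Schrijver's theorem, and as written the crucial surjectivity is asserted rather than established. If you intend this as a self-contained alternative, you must either supply those arguments or cite a theorem (Schrijver's \cite{Sch}, or a real-compact version of Luna-type slice/covariant results) that delivers the surjectivity.
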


The organization of this paper is as follows. 
In Section 2 we discuss a related result by Lov\'asz, which characterizes the rank of vertex connection matrices of  partition functions of real valued weighted spin models.

Section 3 deals with some framework and preliminaries. In particular, we give a combinatorial interpretation of the space of tensors invariant under the action of $\stab(h)$, for $h:\N^n\to \R$ (cf. Theorem \ref{thm:main2}), from which we will deduce Theorem \ref{thm:main}.

In Section 4 we give a proof of Theorem \ref{thm:main2}. 
Our proof uses a result of Schrijver, characterizing algebras of $G$-invariant tensors for subgroups $G$ of $O(n)$.

\section{Related work}
In \cite{Lovasz}, Lov\'asz characterized the rank of vertex connection matrices of partition functions of real valued weighted spin models.
In order to state his result, we first need to introduce some terminology.
This is not used anywhere else in this paper. 

Let $\al\in \R^n$ be strictly positive and let $\beta\in \R^{n\times n}$ be symmetric.  
Following de la Harpe and Jones \cite{HJ} we call the pair $(\alpha,\beta)$ a \emph{weighted spin model}.
Let $\G'$ be the collection of all graphs, allowing multiple edges but no loops or circles. 
The \emph{partition function} of $(\alpha,\beta)$ is the graph parameter $p_{\alpha,\beta}:\G'\to \R$ defined by
\begin{equation}
p_{\al,\beta}(H):=\sum_{\phi:VH\to [n]}\prod_{v\in VH}\al_{\phi(v)}\cdot \prod_{uv\in EH} \beta_{\phi(u),\phi(v)},
\end{equation}
for $H\in \G'$.

We can view $p_{\alpha,\beta}$ in terms of weighted homomorphisms.
Let $G(\al,\beta)$ be the complete graph on $n$ vertices (including loops) with vertex weights given by $\al$ and edge weights given by $\beta$. 
Then $p_{\alpha,\beta}(H)$ can be viewed as counting the number of weighted homomorphisms of $H$ into $G(\al,\beta)$.
In this context $p_{\al,\beta}$ is often denoted by $\hom(\cdot,G(\alpha,\beta))$.

A \emph{$k$-labeled graph} is a graph $H$ with an injective map $\phi:[k]\to VH$.
Let $\G'_k$ be the collection of all $k$-labeled graphs.
For $F,H\in \G'_k$ define $FH\in \G'_k$ to be the $k$-labeled graph obtained  from the disjoint union of $F$ and $H$ by identifying equally labeled vertices.
For any graph parameter $p:\G'\to \R$ and $k\in \N$, the \emph{$k$-th vertex connection matrix} $N_{p,k}$ is the $\G'_k\times \G'_k$ matrix defined as follows:
\begin{equation}
N_{p,k}(F,H)=p(FH),
\end{equation}
for $F,H\in \G'_k$.

The vertex connection matrices were used by Freedman, Lov\'asz and Schrijver in \cite{FLS} to characterize graph parameters which are of the form $p_{\alpha,\beta}$ for some positive $\alpha\in \R^n$ and symmetric $\beta\in \R^{n\times n}$.

Let $\alpha\in \R^n$ positive and $\beta\in \R^n$ symmetric.
Lov\'asz \cite{Lovasz} characterized the ranks of the vertex connection matrices of the parameter $p_{\alpha,\beta}$.
Let $G=G(\alpha,\beta)$.
Two distinct vertices $i,j\in VG$ are called \emph{twins} if $\beta(i,l)=\beta(j,l)$ for all $l\in VG$.

\begin{theorem}	[Lov\'asz \cite{Lovasz}] \label{thm:lo}
Let $G$ be as above and let $\aut(G)\subseteq S_n$ be the automorphism group of $G$.
If $G$ is twin free, then
\begin{equation}
\rk(N_{p_{\alpha,\beta},k})=\dim (V^{\otimes k})^{\aut(G)}.
\end{equation}
\end{theorem}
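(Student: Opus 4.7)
The plan is to translate the problem into tensor-algebra language, mirroring the framework used later in the paper for Theorem \ref{thm:main}. For each $k$-labeled graph $F\in \G'_k$ with labeled vertices $v_1,\ldots,v_k$, I would define a tensor $\tau(F)\in V^{\otimes k}$ by
\begin{equation*}
\tau(F)_{i_1,\ldots,i_k}=\sum_{\substack{\psi:VF\to[n]\\ \psi(v_j)=i_j\,\forall j}}\prod_{v\in VF\setminus\{v_1,\ldots,v_k\}}\alpha_{\psi(v)}\prod_{uv\in EF}\beta_{\psi(u),\psi(v)}.
\end{equation*}
Splitting a coloring of $V(FH)$ according to its restriction to the identified labeled vertices gives
\begin{equation*}
N_{p_{\alpha,\beta},k}(F,H)=\sum_{i\in[n]^k}\Bigl(\prod_{j=1}^k\alpha_{i_j}\Bigr)\tau(F)_i\,\tau(H)_i,
\end{equation*}
so $N_{p_{\alpha,\beta},k}$ is the Gram matrix of the $\tau(F)$ with respect to the inner product on $V^{\otimes k}$ obtained as the $k$-fold tensor power of $\langle e_i,e_j\rangle=\alpha_i\delta_{ij}$ on $V$. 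Since $\alpha$ is strictly positive, this inner product is positive definite, and hence $\rk(N_{p_{\alpha,\beta},k})=\dim\mathrm{span}\{\tau(F):F\in\G'_k\}$.

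Next I would observe the inclusion $\mathrm{span}\{\tau(F)\}\subseteq (V^{\otimes k})^{\aut(G)}$: every $\sigma\in\aut(G)\subseteq S_n$ preserves $\alpha$ and $\beta$, so the defining sum for $\tau(F)$ is unchanged when all colors are simultaneously relabeled by $\sigma$. This yields the easy inequality $\rk(N_{p_{\alpha,\beta},k})\le\dim (V^{\otimes k})^{\aut(G)}$ with no hypothesis needed on $G$.

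The real content of the theorem is the reverse inclusion. A basis of $(V^{\otimes k})^{\aut(G)}$ is given by the orbit sums $\sum_{I'\in\aut(G)\cdot I}e_{I'}$, one per $\aut(G)$-orbit $I$ on $[n]^k$, so the goal is to realize each orbit sum inside $\mathrm{span}\{\tau(F)\}$. I would proceed in two stages. First, for $k=1$, note that the $\tau(W)$ for $1$-fragments $W$ form an $\aut(G)$-invariant subalgebra $\A\subseteq V=\R^n$: the pointwise product is realized by gluing two $1$-fragments at their common open end, and $\A$ contains the constant vector $\tau(\bullet)=(1,\ldots,1)$. By the finite-set analogue of the Stone--Weierstrass theorem, it then suffices to show that $\A$ separates the $\aut(G)$-orbits on $[n]$, and this is precisely where twin-freeness is used. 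For general $k$, I would combine $1$-fragment gadgets attached at each open end (which handle the ``factorizable'' invariants) with $k$-fragments whose open ends interact through the rest of the graph -- a direct edge between two labeled vertices already contributes the tensor $\beta$ itself, which is typically non-factorizable -- and symmetrize to produce an arbitrary orbit sum.

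The main obstacle is the separation lemma at $k=1$: showing that in a twin-free graph $G$, any two $\aut(G)$-inequivalent vertices are distinguished by the value of $\tau(W)$ for some $1$-fragment $W$. Without twin-freeness the statement collapses, since twins $i,j$ with $\beta(i,\cdot)=\beta(j,\cdot)$ give $\tau(W)_i=\tau(W)_j$ for \emph{every} $1$-fragment $W$ by a routine induction on the structure of $W$; so the hypothesis is essential. The content of the lemma is the converse, which I would approach via a Weisfeiler--Leman style refinement showing that the multiset of rooted weighted walks emanating from a vertex of a twin-free weighted graph determines its $\aut(G)$-orbit. Making this rigorous, together with the symmetrization argument needed to pass from $k=1$ to general $k$, is where the technical work lies.
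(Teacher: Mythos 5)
The paper does not prove this theorem; it cites Lov\'asz and remarks only that ``it is possible to give a proof using our proof method for Theorem~\ref{thm:main}.'' Your setup does parallel that framework correctly: the tensor $\tau(F)$ you define is the right analogue of $p_h$, the factorization of $N_{p_{\alpha,\beta},k}$ as a Gram matrix with respect to the $\alpha$-weighted inner product is correct, and the easy inclusion $\mathrm{span}\{\tau(F)\}\subseteq (V^{\otimes k})^{\aut(G)}$ follows as you say. Your observation that twins collapse $\tau$-images (and hence the hypothesis is necessary) is also correct.

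The hard direction, however, has two real gaps. First, the separation lemma at $k=1$ as you propose to prove it would fail: a Weisfeiler--Leman style refinement based on rooted weighted walk counts does \emph{not} determine $\aut(G)$-orbits in general. There exist twin-free walk-regular (indeed strongly regular) graphs that are not vertex-transitive, so two vertices can have identical rooted walk profiles of every length yet lie in distinct orbits; color-refinement stabilization is strictly coarser than the orbit partition. You would have to exploit the full strength of $\tau(W)$ over \emph{all} rooted graphs $W$ -- rooted homomorphism profiles -- and even then the argument that these separate orbits in a twin-free graph is nontrivial and is not addressed by a local refinement heuristic. Second, the passage from $k=1$ to general $k$ is left as a vague symmetrization sketch. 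Knowing that $\A_1 := \mathrm{span}\{\tau(W):W\in\G'_1\}$ equals $(\R^n)^{\aut(G)}$ does not by itself give the corresponding equality in degree $k$; the tensors $\tau(F)$ for general $k$-labeled $F$ form an algebra under disjoint union, but $(V^{\otimes k})^{\aut(G)}$ is not spanned by tensor products of degree-one pieces, and the role of ``contraction'' (identifying two labeled vertices) needs to be made precise. The route the paper hints at -- proving a permutation-group analogue of Schrijver's Theorem~\ref{thm:schrijver} (a graded subalgebra of $T(V)$ closed under the appropriate operations and containing certain canonical tensors equals $T(V)^{\stab(\A)}$ for a subgroup of $S_n$), then showing $\stab(\tau(\R\G'))=\aut(G)$ using twin-freeness -- handles both of these issues systematically and is closer to Lov\'asz's actual argument. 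As written, your proposal is an outline with the two hardest steps unresolved, and the one concrete suggestion for the first of them would not succeed.
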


If $G$ has twins, one can make a twin free graph $G'$ (by identifying twins) such that $\hom(H,G)=\hom(H,G')$ for all $H\in \G'$. 

Our result on the rank of the edge connection matrices of partition functions of real vertex models is a natural analogue of Lov\'asz' theorem.
It is possible to give a proof of Theorem \ref{thm:lo} using our proof method for Theorem \ref{thm:main}. 
We will however not do this here.

In \cite{lex spin}, Schrijver introduced a different type of vertex connection matrix. It is also possible to characterize the rank of these connection matrices of partition functions of unweighted spin models (i.e. $\alpha$ is the all ones vector) using our method. The characterization is similar to Theorem \ref{thm:lo}.

\section{Framework and preliminaries}
In this section we will introduce the necessary framework and preliminaries in order to give a proof of Theorem \ref{thm:main}.

Let $\F:=\bigcup_{k=0}^{\infty} \F_k$ and let $\R \F$ denotes the linear space consisting of (finite) formal $\R$-linear combinations of fragments.
These are called \emph{quantum fragments}.

Let $V=\R^n$ be equipped with the standard inner product $\langle \cdot,\cdot,\rangle$ and let $e_1,\ldots, e_n$ be the standard basis for $V$.
Let
\begin{equation}
T(V)=\bigoplus_{k=0}^{\infty} V^{\otimes k}.
\end{equation}
The space $T(V)$ is the \emph{tensor algebra} over $V$ (with product the tensor product). 
The standard basis for $V^{\otimes k}$ is given by the set $\{e_\phi\mid \phi:[k]\to [n]\}$, where $e_\phi:=e_{\phi(i)}\otimes e_{\phi(2)}\otimes \ldots \otimes e_{\phi(k)}$. 
The inner product on $V$ naturally extends to $T(V)$.

We now fix a vertex model $h:\N^n\to \R$ and show that there is a natural map from $\R\F$ to the tensor algebra.

Let $D\subseteq E$ be finite sets and let $\phi:D\to [n]$ and $\psi:E\to [n]$.
We say that $\psi$ \emph{extends} $\phi$ if $\phi(d)=\psi(d)$ for all $d\in D$.
This is denoted by $\psi \unrhd \phi$.
For a $k$-fragment $F$ and a map $\phi:[k]\to [n]$ we will consider $\phi$ as a map from the half edges of $F$ to $[n]$.
Moreover, for a fragment $F$ we denote by $VF$ its vertices which are not open ends and by $EF$ its edges, including half edges.
Define for $\phi:[k]\to [n]$ and $F\in \F_k$ 
\begin{equation}
h_{\phi}(F):=\sum_{\substack{\psi:EF\to[n]\\ \psi \unrhd \phi}} \prod_{v\in VF} h(\psi(\delta(v))),
\end{equation}
and extend this linearly to $\R\F_k$.

We now extend $p_h$ to a map from $\R\F$ to the tensor algebra.
Define  $p_h:\R\F\to T(V)$ by
\begin{equation}
p_h(F):= \sum_{\phi:[k]\to [n]}h_{\phi}(F)e_\phi,
\end{equation}
for $F\in \F_k$ and $k\geq 0$, and extend this linearly to $\R\F$. 
Note that for $k=0$ this agrees with our original definition of $p_h$.

Note that for $F,H\in \F_k$,
\begin{equation}
p_h(F*H)=\sum_{\phi:[k]\to [n]} h_{\phi}(F)h_{\phi}(H)=\langle p_h(F),p_h(H)\rangle.    \label{eq:hom prod}
\end{equation}
This implies that $M_{p_h,k}$ is the Gram matrix of the tensors $p_h(F)$ for $F\in \F_k$.
Hence
\begin{equation}
\rk(M_{p_h,k})=\dim(p_h(\R\F_k)).			\label{eq:rankisdim}
\end{equation}

Recall that $\stab(h)$ is the subgroup of $O(n)$ that leaves $h$ invariant.
The following theorem states that the image of $p_h$ is equal to the algebra of tensors invariant under $\stab(h)$;
thus giving a combinatorial interpretation of the algebra of tensors invariant under $\stab(h)$.

\begin{theorem}	\label{thm:main2}
Let $h:\N^n\to \R$.
Then
\begin{equation}
p_{h}(\R\F)=T(V)^{\stab(h)}.
\end{equation}
\end{theorem}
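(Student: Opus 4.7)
The proof has two directions. For the inclusion $p_h(\R\F) \subseteq T(V)^{\stab(h)}$, I would first establish the equivariance
\begin{equation*}
p_{Uh}(F) = U^{\otimes k} p_h(F)
\end{equation*}
for every $U \in O(n)$ and every $F \in \F_k$, with $U$ acting diagonally on $V^{\otimes k}$. The cleanest way to see this is to recognize $p_h(F)$ as the value of a tensor network: at each internal vertex $v$ of degree $d_v$ place the tensor $t^{(d_v)}_h := \sum_{\phi:[d_v]\to[n]} h(\phi([d_v])) e_\phi \in V^{\otimes d_v}$, take the tensor product over all internal vertices, and contract the pair of indices corresponding to the two endpoints of each internal edge using the standard inner product; the remaining half-edge indices assemble into an element of $V^{\otimes k}$ which one checks equals $p_h(F)$. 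A direct computation from the definition of the $O(n)$-action on $R$ yields $t^{(k)}_{Uh} = U^{\otimes k} t^{(k)}_h$, and since contraction by the standard inner product is itself $O(n)$-equivariant the stated equivariance for $p_h(F)$ follows. Taking $U \in \stab(h)$ then yields $p_h(F) = p_{Uh}(F) = U^{\otimes k} p_h(F)$.

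For the reverse inclusion I would invoke Schrijver's theorem, which characterizes the $O(n)$-invariant tensor subalgebras as exactly those graded subalgebras of $T(V)$ that are closed under a short list of natural operations. The plan is to verify that $\A := p_h(\R\F)$ satisfies these axioms: (i) $\A$ is a graded subalgebra, with closure under tensor product given by disjoint union of two fragments after an appropriate label shift; (ii) each graded piece $\A \cap V^{\otimes k}$ is stable under the $S_k$-action on tensor factors, via relabeling of open ends; (iii) $\A$ contains the basic invariant $\sum_i e_i \otimes e_i$, realized by $p_h$ of the fragment consisting of two open ends joined by a single edge; and (iv) $\A$ is closed under pairwise contractions using the standard inner product, realized by the surgery that glues two open ends of a single fragment into one internal edge. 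Schrijver's theorem then delivers a closed subgroup $G \leq O(n)$ with $\A = T(V)^G$, characterized explicitly as $G = \{U \in O(n) : Ut = t \text{ for all } t \in \A\}$.

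It remains to identify $G$ with $\stab(h)$. The inclusion $\stab(h) \subseteq G$ is immediate from the first paragraph. For the reverse inclusion, the $k$-star fragment (one internal vertex of degree $k$ attached by single edges to each of its $k$ open ends) has $p_h$-image equal to $t^{(k)}_h$, so any $U \in G$ must fix $t^{(k)}_h$ for every $k$; by the equivariance of the first paragraph this is exactly the condition $Uh = h$. The main obstacle in the plan is the axiomatic verification in step two: each closure axiom must be realized by a concrete fragment-level operation, which requires some care with the paper's half-edge conventions, and the formulation of Schrijver's theorem one invokes must match the resulting list of operations precisely.
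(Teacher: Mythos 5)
Your proposal is correct and follows essentially the same route as the paper: verify that $p_h(\R\F)$ is a graded, contraction-closed subalgebra containing $\sum_i e_i\otimes e_i$, invoke Schrijver's theorem, and then identify $\stab(p_h(\R\F))$ with $\stab(h)$ by looking at the images of the star (basic) fragments. Your equivariance observation $p_{Uh}(F)=U^{\otimes k}p_h(F)$, proved by viewing $p_h(F)$ as a tensor network built from the $t^{(d_v)}_h$ and using $O(n)$-equivariance of contraction, is a slightly cleaner repackaging of the paper's Lemma~\ref{lem:basic} together with its remark that every fragment arises from basic fragments and $I$ by products and contractions; also note that the $S_k$-stability you list as axiom (ii) is not required in the version of Schrijver's theorem used here (it follows from the other conditions), so you can drop it.
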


Note that Theorem \ref{thm:main} follows from Theorem \ref{thm:main2}, using \eqref{eq:rankisdim}.
So we only need to prove Theorem \ref{thm:main2}. 
This will be the content of the next section.

\section{A proof of Theorem \ref{thm:main2}}
A crucial ingredient in our proof is a result of Schrijver, characterizing algebras of $G$-invariant tensors for subgroups $G$ of $O(n)$.
To state this result we first need some definitions.

A subset $A$ of $T(V)$ is called \emph{graded} if $A=\bigoplus_{k=0}^\infty A\cap V^{\otimes k}$.

For $1\leq i< j\leq k\in \N$, the \emph{contraction}, $C^k_{i,j},$ is the unique linear map
\begin{align}
&C^k_{i,j}:V^{\otimes k}\to V^{\otimes k-2}, \text{   satisfying}
\\ 
v_1\otimes \ldots \otimes v_k&\mapsto \langle v_i,v_j\rangle v_1\otimes \cdots \otimes v_{i-1}\otimes v_{i+1} \cdots \otimes v_{j-1}\otimes v_{j+1}\otimes \cdots \otimes v_k.	\nonumber
\end{align}
A graded subset $A$ of $T(V)$ is called \emph{contraction closed} if $C^k_{i,j}(a)\in A$ for all $a\in A\cap V^{\otimes k}$ and $1\leq i< j\leq k\in \N$.

Note that for any subgroup $G\subseteq O(n)$, $T(V)^G$ is clearly graded.
It is also contraction closed.
Indeed, write for $v=v_1\otimes \ldots \otimes v_k$, $C^k_{i,j}(v)=\langle v_i,v_j\rangle u$. 
Then for any $U\in O(n)$ we have 
\begin{equation}
U(C^k_{i,j}(v))=\langle v_i,v_j\rangle Uu=\langle Uv_i,Uv_j\rangle Uu=C^k_{i,j}(Uv).
\end{equation}
Hence if $v\in T(V)^G$, then $C^k_{i,j}(v)\in T(V)^G$.

Define for a subset $A$ of $T(V)$, the \emph{stabilizer} of $A$ by
\begin{equation}
\stab(A):=\{U\in O(n)\mid Ua=a\text{ for all } a\in A\}.
\end{equation}

\begin{theorem}[Schrijver \cite{Sch}]		\label{thm:schrijver}
Let $A\subseteq T(V)$. 
Then $A=T(V)^{\stab(A)}$ if and only if $A$ is a graded contraction closed subalgebra of $T(V)$ and contains $\sum_{i=1}^n e_i\otimes e_i$.
\end{theorem}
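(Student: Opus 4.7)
For the ``only if'' direction, suppose $A = T(V)^{\stab(A)}$ and set $G := \stab(A)$. I would verify each required property directly from the description $A = T(V)^G$: $A$ is graded because the $G$-action preserves the grading on $T(V)$; $A$ is closed under tensor products since $U(a\otimes b) = Ua\otimes Ub = a\otimes b$ for $U\in G$ and $a,b\in A$; $A$ is contraction closed by the calculation already given in the excerpt; and $A$ contains $\delta := \sum_{i=1}^n e_i\otimes e_i$, because $\delta$ is invariant under all of $O(n)$ (it is the inner-product tensor, which the orthogonal group preserves by definition).

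For the ``if'' direction, let $A$ satisfy the three hypotheses and set $G := \stab(A)$. Since $G = \bigcap_{a\in A}\{U\in O(n):Ua=a\}$ is an intersection of closed conditions inside $O(n)$, it is a closed, hence compact, Lie subgroup. The inclusion $A \subseteq T(V)^G$ is immediate, so the task is to prove $T(V)^G \subseteq A$. I would argue degree by degree. Writing $A_k := A \cap V^{\otimes k}$ and decomposing $V^{\otimes k} = A_k \oplus A_k^{\perp}$ (both summands $G$-invariant, the first pointwise), it suffices to show $(A_k^{\perp})^G = 0$. Assume for contradiction that $0 \neq x \in (A_k^{\perp})^G$; the goal is then to produce some $U \in G$ with $Ux \neq x$.

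I would work infinitesimally via the Lie algebra $\mathfrak{g} \subseteq \mathfrak{so}(n)$ of $G$. By the closed-subgroup/Lie correspondence, $\mathfrak{g}$ coincides with the set of skew-symmetric $X \in \mathfrak{so}(n)$ whose induced derivation on $T(V)$ annihilates every element of $A$. If some such $X$ satisfies $X\cdot x \neq 0$, then $\exp(tX)\in G$ fails to fix $x$ for small $t\neq 0$, giving the contradiction. Thus the proof reduces to the following first-fundamental-theorem-style claim: whenever $x\notin A_k$, there exists $X\in\mathfrak{g}$ with $X\cdot x\neq 0$; equivalently, the joint kernel on $V^{\otimes k}$ of all $A$-annihilating $X\in\mathfrak{so}(n)$ is exactly $A_k$.

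I expect this non-degeneracy of the infinitesimal pairing to be the technical core, and it is where all three hypotheses on $A$ are indispensable. The closure of $A$ under tensor product and contraction forces the annihilator space $\mathfrak{g}$ to be stable under the operations needed to propagate information between degrees, while the presence of $\delta$ lets one tensor by $\delta$ and then contract to shift tensor degree (producing, up to scalars, the identity in degree $k$), so that an obstruction in $V^{\otimes k}$ can be transported to $V^{\otimes k\pm 2}$. My plan would be an induction on $k$ exploiting this degree-shift, with a base case handled by direct linear algebra on $V$ and $V\otimes V$, combined with a careful analysis of how $\mathfrak{so}(n)$ acts by derivations on $T(V)$ to establish the required non-degeneracy; an alternative route would be a Tannakian-style reconstruction identifying closed subgroups of $O(n)$ with the graded contraction-closed subalgebras of $T(V)$ containing $\delta$, but I expect the infinitesimal argument to be the more direct path.
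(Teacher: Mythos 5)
The paper does not prove this theorem; it cites it as a result of Schrijver \cite{Sch}, whose argument is a genuinely non-trivial first-fundamental-theorem-style result and is not reproduced here. So there is no ``paper's own proof'' to compare against, and your proposal has to stand on its own.

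Your ``only if'' direction is fine. In the ``if'' direction, however, there is a fatal gap in the reduction to the Lie algebra $\mathfrak g = \mathrm{Lie}(\stab(A))$. You reduce the problem to the claim that the joint kernel on $V^{\otimes k}$ of all $A$-annihilating elements of $\mathfrak{so}(n)$ is exactly $A_k$, and you plan to derive a contradiction by exponentiating a one-parameter subgroup. This cannot work when $\stab(A)$ has several connected components, because the Lie algebra is blind to the discrete part of the group, and $\stab(A)$ is almost never connected. A concrete counterexample: take $A$ to be the subspace of all tensors of \emph{even} degree. This is a graded, contraction-closed subalgebra containing $\delta=\sum e_i\otimes e_i$, and $\stab(A)=\{\pm I_n\}$, so $\mathfrak g = 0$. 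Then for $k=1$ one has $A_1=0$, $A_1^{\perp}=V$, and every element of $V$ is killed by every $X\in\mathfrak g$ (vacuously), so the ``joint kernel equals $A_k$'' claim fails outright; the only reason $(A_1^\perp)^{\stab(A)}=0$ is the element $-I$, which $\exp$ of nothing in $\mathfrak g$ can produce. More generally, the equality $T(V)^G = A$ must be sensitive to the full group $G$ and not only to $G_0$, so any purely infinitesimal argument must fail. (The orthogonal decomposition $V^{\otimes k}=A_k\oplus A_k^\perp$ into $G$-invariant summands is fine, and the phrasing ``produce $U\in G$ with $Ux\neq x$'' is logically acceptable as a reformulation, but the tool you propose to produce such a $U$ is not strong enough.) To actually prove the hard direction you would need to bring in the global structure of the compact group --- e.g.\ Reynolds-operator averaging over $G$, the first fundamental theorem for $O(n)$, or a Tannaka/Krein-type reconstruction for the category of $G$-modules --- rather than a Lie-algebra computation; this is indeed what makes Schrijver's theorem substantial.
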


Let $h:\N^n\to \R$ be a vertex model.
Our proof now consists of two steps.
First we show that $p_h(\R\F)$ satisfies the conditions of Theorem \ref{thm:schrijver}. 
After that we show that $\stab(p_h(\R\F))=\stab(h)$. 
Combining these two steps completes the proof of Theorem \ref{thm:main2}.

We can make $\R\F$ into a (graded) algebra by defining, for $F\in \F_k$ and $H\in \F_l$, $F\otimes H$ to be the disjoint union of $F$ and $H$, where we add $k$ to the labels of the open ends of $H$ so that $F\otimes H$ is contained in $\F_{k+l}$.

We define a contraction operation for fragments.
For $1\leq i< j\leq k\in \N$, the \emph{contraction} $\Gamma^k_{i,j}:\F_k\to\F_{k-2}$ is defined as follows:
for $F\in \F_k$, $\Gamma^k_{i,j}(F)$ is the $(k-2)$-fragment obtained from $F$ by connecting the half edges incident with the 
open ends labeled $i$ and $j$ to a single edge (erasing $i$ and $j$) and then relabeling the remaining open ends such that the order 
is preserved.

The following lemma shows that $p_h$ is an homomorphism of algebras and that $p_h$ preserves contractions.

\begin{lemma}	\label{lem:map p}
The map $p_h$ is a homomorphism of algebras. 
Furthermore, for $1\leq i<j\leq k \in \N$ and $F\in \F_k$,  
\begin{equation}
p_h(\Gamma^k_{i,j}(F))=C^k_{i,j}(p_h(F)).
\end{equation}
\end{lemma}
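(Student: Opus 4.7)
The plan is to prove each assertion by unwinding the definition of $p_h$ and showing the combinatorial identity comes from a canonical bijection on label assignments.

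For the algebra homomorphism property, I would fix $F\in\F_k$ and $H\in\F_l$ and analyze $p_h(F\otimes H)$. Since $F\otimes H$ is a disjoint union (with the labels of $H$ shifted by $k$), each map $\phi:[k+l]\to[n]$ factors uniquely as $(\phi_1,\phi_2)$ with $\phi_1:[k]\to[n]$, $\phi_2:[l]\to[n]$, and $e_\phi=e_{\phi_1}\otimes e_{\phi_2}$. Similarly $E(F\otimes H)=EF\sqcup EH$ and $V(F\otimes H)=VF\sqcup VH$, so any $\psi:E(F\otimes H)\to[n]$ extending $\phi$ splits as $\psi_1\unrhd\phi_1$ and $\psi_2\unrhd\phi_2$, and the vertex product factors:
\begin{equation}
\prod_{v\in V(F\otimes H)} h(\psi(\delta(v))) = \prod_{v\in VF}h(\psi_1(\delta(v)))\cdot \prod_{v\in VH}h(\psi_2(\delta(v))).
\end{equation}
Summing first over $\psi_1,\psi_2$ gives $h_\phi(F\otimes H)=h_{\phi_1}(F)\,h_{\phi_2}(H)$, and then summing over $\phi$ yields $p_h(F\otimes H)=p_h(F)\otimes p_h(H)$. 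Linearity extends this to all of $\R\F$.

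For the contraction identity, I would first compute the right-hand side directly. Since $C^k_{i,j}(e_\phi)=\langle e_{\phi(i)},e_{\phi(j)}\rangle e_{\phi'}$, where $\phi'$ is $\phi$ with the $i$-th and $j$-th entries deleted (and the rest order-preservingly relabeled to $[k-2]$), this contraction equals $e_{\phi'}$ when $\phi(i)=\phi(j)$ and $0$ otherwise. Therefore
\begin{equation}
C^k_{i,j}(p_h(F)) = \sum_{\phi':[k-2]\to[n]} \Bigl(\sum_{\phi\unrhd\phi',\,\phi(i)=\phi(j)} h_\phi(F)\Bigr) e_{\phi'},
\end{equation}
where $\phi\unrhd\phi'$ means $\phi:[k]\to[n]$ restricts to $\phi'$ after removing positions $i,j$. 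On the other hand, by definition
\begin{equation}
p_h(\Gamma^k_{i,j}(F)) = \sum_{\phi':[k-2]\to[n]} h_{\phi'}(\Gamma^k_{i,j}(F))\, e_{\phi'}.
\end{equation}

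The main (purely combinatorial) step is to identify the inner coefficient. The edge set of $\Gamma^k_{i,j}(F)$ differs from that of $F$ only in that the two half edges at open ends $i,j$ have been fused into one ordinary edge; the vertex set is unchanged. Thus a map $\psi:E\Gamma^k_{i,j}(F)\to[n]$ extending $\phi'$ corresponds bijectively to a pair $(c,\tilde\psi)$ where $c\in[n]$ is the value on the fused edge and $\tilde\psi:EF\to[n]$ extends the map $\phi:[k]\to[n]$ obtained from $\phi'$ by inserting the value $c$ at positions $i$ and $j$. Since the vertex incidences are identical on both sides, the product $\prod_{v\in VF}h(\tilde\psi(\delta(v)))$ is the same, giving
\begin{equation}
h_{\phi'}(\Gamma^k_{i,j}(F)) = \sum_{\phi\unrhd\phi',\,\phi(i)=\phi(j)} h_\phi(F),
\end{equation}
which matches the right-hand side.

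The only subtle point, which is where I would be most careful, is checking that the bookkeeping between positions $i,j$ in $[k]$ and the order-preserving relabeling of the remaining positions to $[k-2]$ is consistent with the definition of $\Gamma^k_{i,j}$; once the bijection on edge-labelings is set up correctly, the identity is immediate. Everything else is just unwinding definitions.
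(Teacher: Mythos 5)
Your proof is correct and follows essentially the same path as the paper's: factor $h_\phi(F\otimes H)=h_{\phi_1}(F)h_{\phi_2}(H)$ for the homomorphism property, and for the contraction restrict to $\phi$ with $\phi(i)=\phi(j)$ and match coefficients with $p_h(\Gamma^k_{i,j}(F))$. The only difference is that you spell out the bijection between $\psi:E\Gamma^k_{i,j}(F)\to[n]$ extending $\phi'$ and pairs $(c,\tilde\psi)$, a step the paper leaves implicit in the third equality of its computation.
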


\begin{proof}
Let $F\in \F_k$ and $H\in \F_l$. Then 
\begin{align}
&p_h(F\otimes H)=\sum_{\phi:[k+l]\to[n]} h_{\phi}(F\otimes H) e_\phi=		\nonumber
\\
&\sum_{\substack{\phi:[k]\to[n]\\\psi:[l]\to[n]}}h_{\phi}(F)h_{\psi}(H)e_\phi\otimes e_\psi=p_h(F)\otimes p_h(H).
\end{align}

As for contractions, let $1\leq i<j\leq k$ and let $F\in \F_k$.
Note that for $\phi:[k]\to[n]$, the contraction of $e_\phi$ is contained in $\{e_{\psi}\mid \psi:[k-2]\to[n]\}$ if $\phi(i)=\phi(j)$ and is zero otherwise. The following equalities now prove the lemma:
\begin{align}
C^k_{i,j}(p_h(F))&=\sum_{\phi:[k]\to[n]}h_{\phi}(F) C^{k}_{i,j}(e_\phi)=\sum_{\substack{\phi:[k]\to[n]\\ \phi(i)=\phi(j)}}h_{\phi}(F) C^{k}_{i,j}(e_\phi)	\nonumber
\\
&= \sum_{\phi:[k-2] \to [n]}h_{\phi}(\Gamma^k_{i,j}(F))e_\phi=p_h(\Gamma^k_{i,j}(F)).
\end{align}
\end{proof}

Now to see that $p_h(\R\F)$ satisfies the conditions of Theorem \ref{thm:schrijver}, note that $p_h(\R\F)$ is clearly graded and by Lemma \ref{lem:map p} it is a contraction closed subalgebra of $T(V)$.
Moreover, let $I\in \F_2$ be the half edge of which both its endpoints are open ends.
Then $p_h(I)=\sum_{i=1}^n e_i\otimes e_i.$
Hence $p_h(\R\F)=T(V)^{\stab(p_h(\R\F)}$, by Theorem \ref{thm:schrijver}.

To conclude the proof, we will finally show that $\stab(h)=\stab(p_h(\R\F))$. 

The \emph{basic $k$-fragment $F_k$} is the $k$-fragment that contains one vertex and $k$ open ends connected to this vertex, labeled $1$ up to $k$.
For a map $\phi:[k]\to[n]$, we define the monomial $x^\phi\in R$ by $x^{\phi}:=\prod_{i=1}^k x_{\phi(i)}$. 
It is not difficult to see that
\begin{equation}
h_{\phi}(F_k)=h(x^{\phi}).
\end{equation}

\begin{lemma}\label{lem:basic}
Let $U\in O(n)$.
Then for any $\psi:[k]\to[n]$,
\begin{equation}
\langle U(p_h(F_k)),e_\psi\rangle=(Uh)(x^\psi).
\end{equation}
\end{lemma}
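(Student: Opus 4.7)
The plan is to expand both sides of the claimed identity in the standard basis and match coefficients. The identity ultimately reduces to a compatibility statement between the action of $O(n)$ on $V^{\otimes k}$ and its action on degree-$k$ monomials in $R$; once this is pinned down, everything follows from bilinearity.

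First I would rewrite the left-hand side. Using the displayed identity $h_\phi(F_k) = h(x^\phi)$ given just before the lemma, we have $p_h(F_k) = \sum_{\phi:[k]\to[n]} h(x^\phi)\, e_\phi$. Applying $U$ (acting diagonally on $V^{\otimes k}$) and pairing with $e_\psi$ gives
\[
\langle U(p_h(F_k)), e_\psi\rangle = \sum_{\phi:[k]\to[n]} h(x^\phi)\,\langle Ue_\phi, e_\psi\rangle = \sum_{\phi:[k]\to[n]} h(x^\phi)\prod_{i=1}^k U_{\psi(i),\phi(i)},
\]
since $\langle Ue_\phi, e_\psi\rangle = \prod_{i=1}^k \langle Ue_{\phi(i)}, e_{\psi(i)}\rangle = \prod_{i=1}^k U_{\psi(i),\phi(i)}$.

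For the right-hand side I would expand $U^{-1} x^\psi$ in the monomial basis. From the defining rule $(Uq)(v) = q(U^{-1}v)$ and $U^{-1} = U^T$, one computes $Ux_i = \sum_j U_{ji} x_j$, hence $U^{-1}x_i = \sum_j U_{ij} x_j$. Since the action of $O(n)$ on $R$ is by algebra automorphisms, multiplicativity yields
\[
U^{-1}x^\psi = \prod_{i=1}^k U^{-1}x_{\psi(i)} = \prod_{i=1}^k \sum_{j=1}^n U_{\psi(i),j}\, x_j = \sum_{\phi:[k]\to[n]} \Bigl(\prod_{i=1}^k U_{\psi(i),\phi(i)}\Bigr) x^\phi.
\]
Applying the linear functional $h$ then gives $(Uh)(x^\psi) = h(U^{-1}x^\psi) = \sum_\phi h(x^\phi) \prod_{i=1}^k U_{\psi(i),\phi(i)}$, which coincides exactly with the expression obtained above.

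The only subtle point is keeping the transpose straight so that the matrix entries appearing on the $V^{\otimes k}$ side genuinely match those appearing after expanding $U^{-1}x^\psi$; this is just the observation that the contragredient action on $R_1 = \mathrm{span}(x_1,\ldots,x_n)$ agrees, after identifying $R_1$ with $V^*$, with the $O(n)$-action on $V$ (using $U^{-1} = U^T$). No deeper ingredient is needed.
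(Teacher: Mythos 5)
Your proof is correct and takes essentially the same approach as the paper's: expand $p_h(F_k)=\sum_\phi h(x^\phi)e_\phi$, pair with $e_\psi$, and recognize the resulting coefficients as those in the monomial expansion of $U^{-1}x^\psi$. The only cosmetic difference is that the paper writes these coefficients abstractly as $u_\phi$ (defined by $U^{-1}e_\psi=\sum_\phi u_\phi e_\phi$) and observes that the same $u_\phi$ appear in $U^{-1}x^\psi=\sum_\phi u_\phi x^\phi$, whereas you compute them explicitly as $\prod_i U_{\psi(i),\phi(i)}$ and verify the matrix-entry match by hand; both hinge on the same compatibility between the $O(n)$-action on $V^{\otimes k}$ and on degree-$k$ monomials.
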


\begin{proof}
Write 
\begin{equation}
U^{-1}e_{\psi}=\sum_{\phi:[k]\to[n]}u_{\phi}e_{\phi}.
\end{equation}
Then
\begin{equation}
U^{-1}x^{\psi}=\sum_{\phi:[k]\to[n]}u_{\phi}x^{\phi}.
\end{equation}
We then have
\begin{align}
\langle U(p_h(F_k)),e_\psi\rangle&=\langle p_h(F_k),U^{-1}e_\psi \rangle=\sum_{\phi:[k]\to [n]}h(x^{\phi})\langle e_\phi,U^{-1}e_\psi \rangle	\nonumber
\\
&=\sum_{\phi:[k]\to[n]}h(x^{\phi})u_{\phi}=h(U^{-1}x^{\psi})=(Uh)(x^{\psi}).	\label{eq:subgroup1}
\end{align}
This proves the lemma.
 
\end{proof}
Lemma \ref{lem:basic} immediately implies that $\stab(p_h(\R\F))\subseteq \stab(h)$.
Moreover, it also implies that $p_h(F_k)\in T(V)^{\stab(h)}$ for any $k$.
Note that $p_h(I)=\sum_{i=1}^n e_i\otimes e_i\in T(V)^{\stab(h)}$, as $\stab(h)\subseteq O(n)$.
Since any fragment $F\in \F$ can be obtained as a series of contractions applied to products of basic fragments and $I$
(see Figure\ref{fig:contract}), it follows that $p_h(\R\F)\subseteq T(V)^{\stab(h)}$.
Hence $\stab(h)\subseteq \stab(p_h(\R\F))$.

\begin{figure}	
\begin{center}
\includegraphics[width=.75\textwidth]{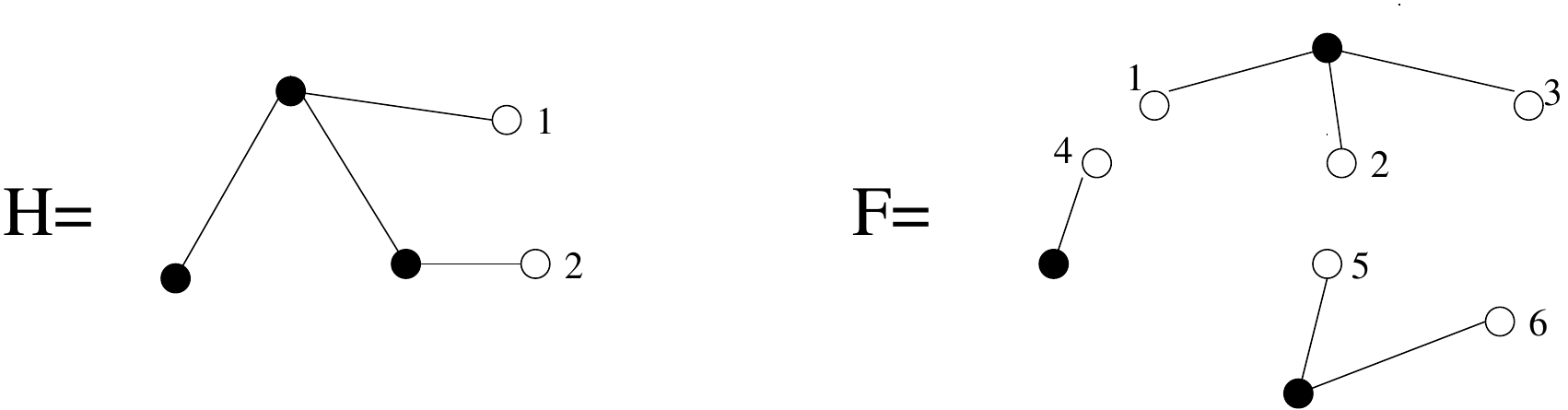}
\caption{The fragment $H$ can be obtained from the fragment $F$ by applying first $\Gamma^{6}_{1,4}$ and then $\Gamma^4_{1,3}$.}
\label{fig:contract}
\end{center}
\end{figure}

 \begin{ack}
I thank Lex Schrijver and Dion Gijswijt for useful discussions and helpful remarks as to the presentation of the results.
\end{ack}

\end{document}